\newcommand{\ra}{\rightarrow}
\newtheorem{theorem}{Theorem}[section]
\newtheorem{lemma}[theorem]{Lemma}
\newtheorem{corollary}[theorem]{Corollary}
\newtheorem{question}[theorem]{Question}
\newcommand{\gj}{\blacksquare}
\newcommand{\gt}{\theta}
\newcommand{\gL}{\Lambda}
\newcommand{\BQ}{\mbox{$\mathbb Q$}}
	\newcommand{\BZ}{\mbox{$\mathbb Z$}}
\newcommand{\CI}{\mbox{$\mathcal I$}}
	\newcommand{\p}{\mbox{$\mathfrak p$}}
\newcommand{\ot}{\mbox{\,$\otimes$\,}}	\newcommand{\op}{\mbox{$\oplus$}}
\begin{document}

\begin{center}
 {\Large \bf Ideal class groups of monoid algebras}\\\vspace{.2in}
 {\large  Husney Parvez Sarwar }\\
\vspace{.1in}
{\small
Department of Mathematics, IIT Bombay, Powai, Mumbai - 400076, India.\;\\
     parvez@math.iitb.ac.in, mathparvez@gmail.com}
\end{center}

\begin{abstract}
 Let $A\subset B$ be an extension of commutative reduced rings and $M\subset N$ an extension of positive commutative
 cancellative torsion-free monoids. 
 We prove that $A$ is subintegrally closed in $B$ and $M$ is 
 subintegrally closed in $N$ if and only if the group of invertible 
 $A$-submodules of $B$ is isomorphic to the group of invertible $A[M]$-submodules of $B[N]$ (\ref{6t2} $(b,d)$).
  In case $M=N$, we prove the same without the assumption that the ring extension is reduced (\ref{6t2}$(c,d)$).
   
\end{abstract}

\section{Introduction}
{\bf Throughout the paper we assume that all rings are commutative with unity, and all monoids are 
commutative cancellative torsion-free. }
For a ring extension $A\subset B$, the group of invertible $A$-submodule of $B$ is denoted
by $\CI(A,B)$. This group has been studied extensively by Roberts and Singh \cite{R-S}. Recently
 Sadhu and Singh (\cite{S-S}, Theorem 1.5) proved:
{\it
  Let $A\subset B$ be an extension of rings and $\BZ_+$ the monoid of positive
  integers. Then $A$ is subintegrally closed in 
 $B$ if and only if $\CI(A,B)\cong \CI(A[\BZ_+],B[\BZ_+])$.
}

Motivated by this result, we inquire the following
\begin{question}\label{6q1}
Let $A\subset B$ be an extension of rings and $M\subset N$ an extension of positive monoids. Are the following
  statements equivalent?

$(i)$  $A$ is subintegrally closed in $B$ and $M$ is subintegrally closed in $N$.

$(ii)$ $A[M]$ is subintegrally closed in $B[N]$.

  $(iii)$ $\CI(A,B)$ is isomorphic to $\CI(A[M],B[N])$.
 \end{question}
 It is always true that $(ii)\Rightarrow (i)$. If $B$ is a reduced ring, then $(i)\Rightarrow (ii)$ is (\cite{B-G}, Theorem 4.79).
 We answer the above question in the affirmative by proving the following result. Our proof uses
 Swan-Weibel's homotopy trick.

 \begin{theorem}\label{6t2}
 Let $A\subset B$ be an extension of  rings and $M\subset N$ an extension of positive monoids.

 $(a)$ If $A[M]$ is subintegrally closed in $B[N]$ and $N$ is affine, then $\CI(A,B)\cong \CI(A[M],B[N])$.
 
 $(b)$ If $B$ is reduced, $A$ is subintegrally closed in $B$ and $M$ is subintegrally closed in $N$,
 then $\CI(A,B)\cong \CI(A[M],B[N])$.
 
 $(c)$ If $M=N$, then the reduced condition on $B$ is not needed i.e. if $A$ is subintegrally closed in $B$, then
 $\CI(A,B)\cong \CI(A[M],B[M])$.
 
 $(d)$ (converse of $(a),(b)$ and $(c)$) 
 If $\CI(A,B)\cong \CI(A[M],B[N])$, then 
 $(i)$ $A$ is subintegrally closed in $B$, $(ii)$ $A[M]$ is subintegrally closed in $B[N]$, and 
 $(iii)$ $B$ is reduced or $M=N$. 
\end{theorem}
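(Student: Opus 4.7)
The natural place to begin is with the scalar-extension map $\alpha : \CI(A,B) \to \CI(A[M], B[N])$ sending $I$ to the $A[M]$-submodule $I \cdot A[M] \subset B[N]$. This is a well-defined group homomorphism, and the $M$-graded decomposition $A[M] = \bigoplus_{m \in M} Am$ gives $I \cdot A[M] \cap B = I$, so $\alpha$ is always injective. Parts $(a), (b), (c)$ thus reduce to producing a left inverse.

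For $(a)$, I would apply Swan--Weibel's homotopy trick. Since $N$ is positive and affine, choose a grading $d : N \to \BZ_{\geq 0}$ with $d^{-1}(0) = \{0\}$; this induces $\BZ_{\geq 0}$-gradings on $B[N]$ and $A[M]$ whose degree-zero parts are $B$ and $A$. Define the ring homomorphism $\psi_t : B[N] \to B[N][t]$, $bn \mapsto bn \, t^{d(n)}$, which restricts to $A[M] \to A[M][t]$ and satisfies $\psi_1 = \mathrm{id}$ while $\psi_0$ is the augmentation onto degree zero. Given $J \in \CI(A[M], B[N])$, let $\tilde J$ be the $A[M][t]$-submodule of $B[N][t]$ generated by $\psi_t(J)$; using $J J' = A[M]$ one checks $\tilde J \in \CI(A[M][t], B[N][t])$. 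Since $A[M] \subset B[N]$ is subintegrally closed, Sadhu--Singh gives $\CI(A[M], B[N]) \cong \CI(A[M][t], B[N][t])$ via scalar extension, so $\tilde J = K \cdot A[M][t]$ for a unique $K \in \CI(A[M], B[N])$. Evaluating at $t = 1$ yields $K = J$, while evaluating at $t = 0$ yields $K = I \cdot A[M]$ where $I := \sum_i A \, \psi_0(j_i) \in \CI(A,B)$ for any finite generating set $\{j_i\}$ of $J$; hence $J = \alpha(I)$.

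For $(b)$, Bruns--Gubeladze (Theorem 4.79) gives, from reducedness of $B$ and the two subintegrality hypotheses, that $A[M]$ is subintegrally closed in $B[N]$. To shed the affineness hypothesis of $(a)$, write $M \subset N$ as a filtered colimit of pairs $M_\mu \subset N_\lambda$ with $N_\lambda$ affine positive (every finitely generated submonoid of a positive torsion-free cancellative monoid is affine positive); since $\CI$ commutes with filtered colimits in the coefficient algebras, $\CI(A[M], B[N]) = \varinjlim \CI(A[M_\mu], B[N_\lambda])$, and each term equals $\CI(A, B)$ by $(a)$. Part $(c)$ is handled identically, using the classical fact that $A$ subintegrally closed in $B$ forces $A[M] \subset B[M]$ subintegrally closed with no reducedness assumption.

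For $(d)$, assume $\alpha$ is an isomorphism. Picking a nonzero $m \in M$, the chain $\CI(A,B) \hookrightarrow \CI(A[\BZ_+], B[\BZ_+]) \hookrightarrow \CI(A[M], B[N])$ consists of injections with bijective composite $\alpha$, so both arrows are bijective, and Sadhu--Singh yields $(i)$. For $(iii)$: a nonzero nilpotent $b \in B$ together with any $n \in N \setminus M$ makes $1 + bn$ a unit in $B[N]$, and the principal $A[M]$-submodule $A[M](1 + bn)$ is then invertible with nonzero $N$-graded component at $n$, whereas the $n$-component of $I \cdot A[M]$ vanishes for every $I \in \CI(A,B)$, contradicting surjectivity of $\alpha$. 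For $(ii)$: the $M = N$ case is settled by combining $(i)$ with the classical no-reducedness implication; the $B$ reduced case requires, via an analogous chain $\CI(A,B) \hookrightarrow \CI(A[M], B[M]) \hookrightarrow \CI(A[M], B[N])$, that $M$ be subintegrally closed in $N$, whence Bruns--Gubeladze gives $(ii)$. The principal obstacle is the identification step in $(a)$: both invertibility of $\tilde J$ and its scalar-extended form $K \cdot A[M][t]$ rest on the subintegrality hypothesis via Sadhu--Singh, and the non-affine and non-reduced settings enter only indirectly through the colimit reduction and Bruns--Gubeladze.
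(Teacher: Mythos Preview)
Your treatment of $(a)$, $(b)$, $(c)$ is essentially the paper's: positive grading on the affine monoid, Swan--Weibel homotopy combined with the Sadhu--Singh polynomial result, Bruns--Gubeladze to obtain subintegral closedness of $A[M]$ in $B[N]$, and a filtered-colimit reduction to the affine case. One small point in $(b)$: when writing $N$ as a union of affine submonoids $N_\lambda$, you must take $M_\lambda := M\cap N_\lambda$ and check that $M_\lambda$ is subintegrally closed in $N_\lambda$; the paper notes this explicitly, and it is needed before invoking $(a)$ on each pair.

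For $(d)$ your strategy diverges from the paper's, and here there is a genuine gap. The paper proves $(d)(i)$ and $(d)(ii)$ by \emph{direct construction}: given $b\in B$ with $b^2,b^3\in A$ it exhibits $I=(b^2,1-bm)$ with $I\cdot(b^2,1+bm)=A[M]$, and given $g\in B[N]$ with $g^2,g^3\in A[M]$ it exhibits $I=(g^2,1+g+g^2)$ with $I\cdot(g^2,1-g+g^2)=A[M]$; applying the augmentation $\pi$ then forces $b\in A$, respectively $g\in A[M]$. Your chain argument for $(d)(i)$ can be made to work, but the injectivity of $\CI(A[\BZ_+],B[\BZ_+])\to\CI(A[M],B[N])$ is not free: you need the $B[\BZ_+]$-linear retraction $p:B[N]\to B[\langle m\rangle]$ (keep only monomials supported on $\langle m\rangle$; cancellativity makes this $B[\BZ_+]$-linear) together with $p(A[M])\subset A[\BZ_+]$ to get $J\cdot A[M]\cap B[\BZ_+]=J$.

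The real problem is $(d)(ii)$ in the reduced case. Your chain yields $\CI(A[M],B[M])\cong\CI(A[M],B[N])$, but you then assert this ``requires that $M$ be subintegrally closed in $N$'' with no argument. No result you have cited produces this implication: Sadhu--Singh concerns polynomial extensions of the \emph{coefficient} pair, not enlargement of the ambient ring from $B[M]$ to $B[N]$. To close the gap you must actually build, from $n\in N$ with $n^2,n^3\in M$, an invertible $A[M]$-submodule of $B[N]$ not lying in $B[M]$---for instance $(n^2,1-n)$, whose product with $(n^2,1+n)$ is $A[M]$---and this is exactly the paper's method. Your $(d)(iii)$, on the other hand, is correct and is a pleasant simplification of the paper's nilradical-diagram argument: the principal ideal $A[M](1+bn)$ for $b$ nilpotent and $n\in N\setminus M$ directly witnesses failure of surjectivity of $\alpha$.
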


  The following result which is immediate from (\ref{6t2}), gives the exact conditions when $(i)\Rightarrow (ii)$ in 
  the above question (\ref{6q1}). 
 
 \begin{corollary}
   Let $A\subset B$ be an extension of rings and $M\subset N$ an extension of positive   monoids
  such that $A$ is subintegrally closed in $B$ and $M$ is subintegrally closed in $N$.
  
   $(i)$ If $B$ is reduced or $M=N$, then  $A[M]$ is subintegrally closed in $B[N]$. 
   
   $(ii)$ Conversely if $A[M]$ is subintegrally closed in $B[N]$ and $N$ is affine, then $B$ is reduced or $M=N$. 
\end{corollary}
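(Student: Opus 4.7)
The plan is to derive both parts of the corollary as formal consequences of Theorem \ref{6t2}, using the isomorphism statements $(a),(b),(c)$ in one direction to produce the required isomorphism of $\CI$-groups, and then invoking the converse $(d)$ to translate that isomorphism back into either subintegral closure of $A[M]\subset B[N]$ or the dichotomy ``$B$ reduced or $M=N$''. No new ring-theoretic argument should be required; the corollary is purely a bookkeeping exercise that matches the hypotheses of $(a),(b),(c),(d)$.

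For part $(i)$, I would split on the given disjunction. If $B$ is reduced, the hypotheses of $(b)$ are met, so $\CI(A,B)\cong \CI(A[M],B[N])$. If instead $M=N$, then $(c)$ applies (and requires no reducedness assumption), again yielding $\CI(A,B)\cong \CI(A[M],B[M])=\CI(A[M],B[N])$. In either case we feed the resulting isomorphism into $(d)(ii)$, which concludes that $A[M]$ is subintegrally closed in $B[N]$.

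For part $(ii)$, the hypothesis now includes that $A[M]$ is subintegrally closed in $B[N]$ and that $N$ is affine. These are exactly the assumptions of $(a)$, which gives $\CI(A,B)\cong \CI(A[M],B[N])$. Feeding this isomorphism into $(d)(iii)$ yields $B$ reduced or $M=N$, as desired.

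Because the corollary is obtained by combining the four parts of Theorem \ref{6t2} in this purely formal manner, there is no real obstacle to anticipate; the only point worth double-checking is that the hypothesis ``$A$ subintegrally closed in $B$ and $M$ subintegrally closed in $N$'' appearing in the preamble of the corollary is preserved (indeed, strengthened) at every invocation of $(a),(b),(c),(d)$, so that no circular reasoning occurs when $(d)$ is applied after $(a),(b)$, or $(c)$.
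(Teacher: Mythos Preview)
Your proposal is correct and matches the paper's approach: the paper states the corollary is ``immediate from (\ref{6t2})'' without giving further details, and your argument spells out precisely the formal combinations of $(a),(b),(c),(d)$ that make this so.
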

 
  Let $A$ be a seminormal ring with $\BQ\subset A$  and $M$ a positive
  seminormal monoid. Then (\cite{B-G}, Theorem 8.42)
 proved that $Pic(A)\cong Pic(A[M])$. This result is due to Anderson (\cite{An82}, Theorem 1)
 in the case when $A[M]$ is an almost seminormal integral domain (see \cite{An82}, Definition). 
 As an application of our result (\ref{6t2}$c$), we deduce a special case of this result (see (\ref{6r1})).
 
 Sadhu and Singh (\cite{S-S}, Theorem 2.6) studied the relationship between the two groups $\CI(A,B)$
 and $\CI(A[\BZ_+],B[\BZ_+])$, when $A$ is not subintegrally closed in $B$. Using our result (\ref{6t2}),
 we generalize their result (\cite{S-S}, Theorem 2.6) to the monoid algebra situation in a straightforward way.

 \begin{theorem}\label{6mt2}
  Let $A\subset B$ be an extension of rings and let $^{^+}\!\!\!A$ denote the subintegral closure of
  $A$ in $B$. Assume that $M$ is a positive monoid. Then
  
  $(i)$ the following diagram
 \[
   \xymatrix
   { 1\ar@{->}[r] & \CI(A,{^{^+}\!\!\!A})\ar@{->}[r] \ar@{->}[d]^{\gt(A,^{^+}\!\!\!A)} & \CI(A,B)\ar@{->}[r]^{\phi(A,^{^+}\!\!\!A,B)}
     \ar@{->}[d]^{\gt(A,B)} & \CI(^{^+}\!\!\!A,B)\ar@{->}[d]^{\gt(^{^+}\!\!\!A,B)}_\simeq \ar@{->}[r] & 1\\
     1\ar@{->}[r] & \CI(A[M],{^{^+}\!\!\!A[M]})\ar@{->}[r] & \CI(A[M],B[M])\ar@{->}[r] & \CI(^{^+}\!\!\!A[M],B[M])\ar@{->}[r] & 1
   }
  \]
  is commutative with exact rows.
  
  $(ii)$ If $\BQ\subset A$, then $\CI(A[M],{^{^+}\!\!\!A[M]})\cong \BZ[M]\ot_{\BZ}\CI(A,{^{^+}\!\!\!A})$.
 \end{theorem}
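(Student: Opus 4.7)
For part $(i)$, the plan is to derive both rows of the diagram from the standard three-term exact sequence
\[
1 \lra \CI(R, S) \lra \CI(R, T) \lra \CI(S, T) \lra 1
\]
associated to a tower $R \subset S \subset T$ in which $R \subset S$ is subintegral and $S$ is subintegrally closed in $T$; this sequence is what underlies the top row of (\cite{S-S}, Theorem 2.6). Taking $(R,S,T) = (A, {^{^+}\!\!\!A}, B)$ produces the top row directly. For the bottom row, I first need to identify the subintegral closure of $A[M]$ in $B[M]$ as $({^{^+}\!\!\!A})[M]$: the inclusion $A[M] \subset ({^{^+}\!\!\!A})[M]$ is subintegral by stability of subintegrality under the monoid algebra construction for positive $M$, while $({^{^+}\!\!\!A})[M]$ is subintegrally closed in $B[M]$ by the Corollary preceding this theorem, applied to the subintegrally closed extension ${^{^+}\!\!\!A} \subset B$. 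Feeding $(A[M], ({^{^+}\!\!\!A})[M], B[M])$ into the same three-term sequence gives the bottom row. The vertical maps are the natural homomorphisms $\gt(R, S) \colon \CI(R, S) \to \CI(R[M], S[M])$, $I \mapsto I \cdot R[M]$, and commutativity of the two squares is then immediate from naturality. The rightmost vertical arrow is an isomorphism by Theorem~\ref{6t2}$(c)$ applied to ${^{^+}\!\!\!A} \subset B$.

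For part $(ii)$, the plan is to pass through the characteristic-zero structure theorem for $\CI$ of a subintegral extension. When $\BQ$ is contained in the base ring, the Roberts--Singh logarithm yields a natural isomorphism $\CI(R, S) \cong S/R$ for every subintegral extension $R \subset S$ of $\BQ$-algebras. Applied to $A \subset {^{^+}\!\!\!A}$ this gives $\CI(A, {^{^+}\!\!\!A}) \cong {^{^+}\!\!\!A}/A$; applied to the subintegral $\BQ$-algebra extension $A[M] \subset ({^{^+}\!\!\!A})[M]$ (whose subintegrality was already established in $(i)$) it gives $\CI(A[M], ({^{^+}\!\!\!A})[M]) \cong ({^{^+}\!\!\!A})[M]/A[M]$. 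The final step is the abstract identification
\[
({^{^+}\!\!\!A})[M]/A[M] \;\cong\; ({^{^+}\!\!\!A}/A) \ot_\BZ \BZ[M] \;\cong\; \CI(A, {^{^+}\!\!\!A}) \ot_\BZ \BZ[M],
\]
obtained from the flat-base-change identity $R[M] \cong R \ot_\BZ \BZ[M]$ together with the $\BZ$-flatness of $\BZ[M]$.

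The main obstacle will be in part $(ii)$: I need to verify that the Roberts--Singh log isomorphism is natural enough to carry the evident $M$-grading on $({^{^+}\!\!\!A})[M]/A[M]$ over to the $\BZ[M]$-module structure on $\CI(A, {^{^+}\!\!\!A}) \ot_\BZ \BZ[M]$, so that the chain of identifications above is genuinely an isomorphism of $\CI$ groups and not merely one of abstract abelian groups. In part $(i)$, the only delicate step is the identification of the subintegral closure of $A[M]$ in $B[M]$ with $({^{^+}\!\!\!A})[M]$; once that is in place, the rest is routine diagram chasing powered by Theorem~\ref{6t2}$(c)$.
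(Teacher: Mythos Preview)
Your proposal is correct and follows essentially the same approach as the paper. For $(i)$, the paper likewise invokes the exact sequence underlying \cite{S-S}, Theorem 2.6 (reducing the exactness of both rows to surjectivity of the maps $\phi$, obtained from \cite{Sa}, Proposition 3.1, together with Theorem~\ref{6t2}$(c)$ for the right vertical isomorphism), and for $(ii)$ it uses the Roberts--Singh isomorphism $\xi_{B/A}\colon B/A \to \CI(A,B)$ exactly as you describe, citing \cite{R-S} and \cite{R-R-S} for its validity and naturality.
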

 
 \section{Preliminaries}
 
 \begin{define}
$(1)$  Let $A\subset B$ be an extension of rings.
The extension $A\subset B$ is called {\it elementary subintegral} if $B=A[b]$ for some $b$ with $b^2,b^3\in A$.
If $B$ is a union of subrings which are obtained from $A$ by a finite succession of elementary 
subintegral extensions, then the extension $A\subset B$ is called {\it subintegral}. The 
{\it subintegral closure} of $A$ in $B$, denoted by $_B\!{^{^+}\!\!\!A}$, is the largest
subintegral extension of $A$ in $B$. We say $A$ is {\it subintegrally closed} in $B$ if $_B\!{^{^+}\!\!\!A}=A$.
A ring $A$ is called {\it seminormal} if it is reduced and subintegrally closed in $PQF(R):=\prod_{\p}QF(R/\p)$,
where  $\p$ runs through the minimal prime ideals of $R$ and $QF(R/\p)$ is the quotient field of $R/\p$
(see \cite{B-G}, pg 154).

$(2)$ Let $A\subset B$ and $A'\subset B'$ be two ring extensions. A morphism $\phi$ between the pairs
$(A,B)\ra (A',B')$ is a ring homomorphism $\phi: B\ra B'$ with $\phi(A)\subset A'$. For a ring 
extension $A\subset B$, if $\CI(A,B)$ denotes the multiplicative group of invertible $A$-submodules of $B$,
then $\CI$ is a functor from the category of ring extensions to the category of abelian groups. Let $\CI(\phi)$
denote the group homomorphism which is induced by the morphism $\phi$ of a ring extension.
If $B\subset B'$ and $A\subset A'$, then the inclusion map $i:B\ra B'$ defines a morphism of pairs 
$(A,B)\ra (A',B')$. We will denote $\CI(i)$ by $\gt(A,B)$. For basic facts pertaining
to ring extensions and the functor $\CI$, we refer the reader to \cite{R-S}.
  
 $(3)$ Let $M\subset N$ be an extension of monoids. 
  The extension $M\subset N$ is called {\it elementary subintegral} if $N=M\cup xM$ for some $x$ 
  with $x^2,x^3\in M$. If $N$ is a union of submonoids which are obtained from $M$ by a finite succession of elementary 
subintegral extensions, then the extension $M\subset N$ is called {\it subintegral}. The 
{\it subintegral closure} of $M$ in $N$, denoted by $_N\!{^{^+}\!\!M}$, is the largest
subintegral extension of $M$ in $N$. We say $M$ is {\it subintegrally closed} in $N$ if $_N\!{^{^+}\!\!M}=M$.
Let $gp(M)$ denote the group of fractions of the monoid $M$. We say $M$ is 
 {\it seminormal} if it is subintegrally closed in $gp(M)$.

$(4)$ For a monoid $M$, let $U(M)$ denote the group of units of $M$. 
  If $U(M)$ is a trivial group, then $M$ is called {\it positive}.
  If $M$ is finitely generated, then $M$ is called {\it affine} .

  For basic definitions and facts pertaining to monoids and monoid algebras, we refer the reader to (Ch.2, Ch.4 of \cite{B-G}).
\end{define}

\medskip

\noindent{\bf Notation:} For a ring $A$, $Pic(A)$ denotes the Picard group of $A$, $U(A)$ 
denotes the multiplicative group of units of $A$ and $nil(A)$ denotes the nil radical of $A$.

We note down some results for later use.

The following result which follows with a repeated applications of (\cite{S-S}, Corollary 1.6), is due to Sadhu and Singh.
\begin{lemma}\label{6l7}
 Let $A\subset B$ be an extension of rings. Then $A$ is subintegrally closed in $B$ if and only if
 $A[\BZ^r_+]$ is subintegrally closed in $B[\BZ^r_+]$ for any integer $r>0$.
\end{lemma}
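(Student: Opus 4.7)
The plan is to argue by induction on $r$, using the cited result of Sadhu–Singh (\cite{S-S}, Corollary 1.6) as the base case and as the inductive step applied to a polynomial ring extension.

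For the base case $r=1$, the statement is exactly (\cite{S-S}, Corollary 1.6): $A$ is subintegrally closed in $B$ if and only if $A[\BZ_+]$ is subintegrally closed in $B[\BZ_+]$. This handles $r=1$ directly.

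For the inductive step, assume the lemma for all integers in $[1,r-1]$. The key observation is the identification of monoid algebras
\[
 A[\BZ^r_+] \;\cong\; A[\BZ^{r-1}_+][\BZ_+], \qquad B[\BZ^r_+] \;\cong\; B[\BZ^{r-1}_+][\BZ_+],
\]
under which the inclusion $A[\BZ^r_+]\subset B[\BZ^r_+]$ becomes the inclusion obtained from $A[\BZ^{r-1}_+]\subset B[\BZ^{r-1}_+]$ by adjoining one extra positive variable. I would then apply the $r=1$ case of Sadhu–Singh to the extension of rings $A[\BZ^{r-1}_+]\subset B[\BZ^{r-1}_+]$ in order to conclude that $A[\BZ^{r-1}_+]$ is subintegrally closed in $B[\BZ^{r-1}_+]$ if and only if $A[\BZ^r_+]$ is subintegrally closed in $B[\BZ^r_+]$. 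Chaining this with the inductive hypothesis (which relates subintegral closedness of $A\subset B$ to that of $A[\BZ^{r-1}_+]\subset B[\BZ^{r-1}_+]$) yields the biconditional for $r$.

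The main subtlety to check is simply that (\cite{S-S}, Corollary 1.6) is formulated for an arbitrary extension of commutative rings (with no further hypothesis such as reducedness), so that the single-variable result may be legitimately invoked with $A[\BZ^{r-1}_+]$ and $B[\BZ^{r-1}_+]$ playing the roles of the base rings at each stage of the induction; given this, the argument is routine and does not require any further computation.
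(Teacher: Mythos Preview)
Your argument is correct and is precisely what the paper has in mind: it states that the lemma ``follows with a repeated applications of (\cite{S-S}, Corollary 1.6)'', which is exactly your induction on $r$ via the identification $A[\BZ^r_+]\cong A[\BZ^{r-1}_+][\BZ_+]$. Your remark that (\cite{S-S}, Corollary 1.6) applies to an arbitrary commutative ring extension is the only point that needs checking, and it does hold.
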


 The following result is (\cite{B-G}, Theorem 4.79) by observing that $sn_{B}(A)$ (the seminormalization of $A$ in $B$)
 is same as $_B\!{^{^+}\!\!\!A}$ (the subintegral closure of $A$ in $B$) in our notation.
 \begin{lemma}\label{6l11}
  Let $A\subset B$ be an extension of reduced rings and $M\subset N$ an extension monoids.
  Then $_B\!{^{^+}\!\!\!A}[N\cap sn(M)]$ is the subintegral closure of $A[M]$ in $B[N]$,
  where $sn(M)$ is the seminormalization (subintegral closure) of $M$ in $gp(M)$.
 \end{lemma}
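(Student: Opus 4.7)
Set $C := \,_B\!{^{^+}\!\!\!A}$ and $L := N \cap sn(M)$. The goal is to show $C[L]$ equals the subintegral closure of $A[M]$ in $B[N]$. My plan is to verify two containments: first that $C[L]$ is itself a subintegral extension of $A[M]$ lying inside $B[N]$, and second that any element of $B[N]$ subintegral over $A[M]$ must lie in $C[L]$.

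For the first containment, I would factor $A[M] \subset A[L] \subset C[L]$ and handle each step separately. The monoid extension $M \subset L$ is subintegral inside $N$: for each $x \in L \subset sn(M)$, choose a tower $M = M_0 \subset M_1 \subset \cdots \subset M_k$ with $x \in M_k$ and each $M_{i+1} = M_i \cup y_i M_i$ satisfying $y_i^2, y_i^3 \in M_i$. Since $y_i^2, y_i^3$ lie in $gp(M) \cap N$, one refines the tower so that each $y_i$ lies in $N$. Each elementary monoid step lifts to an elementary subintegral ring extension $A[M_i] \subset A[M_i][y_i]$ because $y_i^2, y_i^3 \in A[M_i]$, giving $A[M] \subset A[L]$ subintegral after taking unions. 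Analogously, $A \subset C$ subintegral lifts to $A[L] \subset C[L]$ subintegral by applying the same mechanism to the scalar extension.

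For the reverse containment, I would exploit reducedness. Since $B$ is reduced and $N$ is cancellative torsion-free, $gp(N)$ is a torsion-free abelian group and thus admits a total order, and $B[N] \hookrightarrow B[gp(N)]$ is reduced. By induction on subintegral tower length, it suffices to treat one elementary step: given $f \in B[N]$ with $f^2, f^3 \in R$ for an intermediate ring $R$ already contained in $C[L]$, show $f \in C[L]$. Writing $f = b_1 y_1 + \cdots + b_r y_r$ with $y_1 > \cdots > y_r$ under the chosen order, reducedness forces $y_1^2$ and $y_1^3$ to appear as the leading monomials of $f^2$ and $f^3$ with respective coefficients $b_1^2$ and $b_1^3$. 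Matching leading terms against elements of $R \subset C[L]$ yields $y_1^2, y_1^3 \in L$ and $b_1^2, b_1^3 \in C$; since $L$ and $C$ are each subintegrally closed in $N$ and $B$ respectively, we conclude $y_1 \in L$ and $b_1 \in C$. Subtracting $b_1 y_1$ and iterating on support size finishes the argument.

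The main obstacle will be making this leading-term analysis rigorous when $B$ is reduced but not a domain, because cancellation among coefficients of distinct terms can conceal the genuine leading monomial. I would resolve this by passing to the quotients $B/\p$ for minimal primes $\p$ of $B$ (each being a domain), performing the leading-term argument there, and reassembling via the injection $B \hookrightarrow \prod_\p B/\p$ provided by reducedness. A secondary subtlety is the monoid-tower refinement claim used in the first containment --- that a subintegral tower inside $sn(M)$ terminating at an element of $N$ can be arranged within $N$ --- which genuinely uses the cancellative torsion-free hypothesis and should be isolated as a separate monoid-theoretic lemma.
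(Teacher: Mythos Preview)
The paper does not give its own proof of this lemma; it simply records the statement as Theorem~4.79 of Bruns--Gubeladze, after noting that their seminormalization $sn_B(A)$ coincides with the subintegral closure $_B{}^{+}\!A$ in the paper's terminology. So there is nothing to compare your argument against except the external reference, and your proposal should be judged as a standalone proof attempt.

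Your first containment is essentially fine, though the ``tower refinement'' step is cleaner if you bypass it and use the characterisation $sn(M)=\{x\in gp(M): x^{n}\in M\text{ for all }n\gg 0\}$ directly: given $x\in L=N\cap sn(M)$, choose $n_{0}$ with $x^{n}\in M$ for $n\ge n_{0}$ and set $M_{k}=M_{k-1}\cup x^{n_{0}-k}M_{k-1}$; each step is elementary subintegral and stays inside $N\cap sn(M)$, so $M\subset L$ is subintegral with no refinement needed.

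The second containment, however, has a genuine gap in the step ``subtracting $b_{1}y_{1}$ and iterating on support size.'' To apply the inductive hypothesis to $f':=f-b_{1}y_{1}$ you would need $(f')^{2},(f')^{3}\in C[L]$, but expanding gives
\[
(f')^{2}=f^{2}-2b_{1}y_{1}f+b_{1}^{2}y_{1}^{2},
\]
and the cross term $2b_{1}y_{1}f$ is only known to lie in $C[L]$ once you already know $f\in C[L]$, which is exactly what you are trying to prove. The same obstruction appears in $(f')^{3}$. So the induction does not close as written; knowing the leading term lies in $C[L]$ does not let you peel it off while preserving the hypothesis $(\cdot)^{2},(\cdot)^{3}\in C[L]$. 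One way to repair this is to separate the two variables: first prove $f\in C[N]$ using that $C$ is subintegrally closed in $B$ together with a same-monoid argument, then independently prove $f\in B[L]$ using that $L$ is subintegrally closed in $N$, and intersect to get $f\in C[N]\cap B[L]=C[L]$. Each of these subsidiary statements still needs an honest argument, but they avoid the circular cross term. Alternatively, consult the proof of Theorem~4.79 in Bruns--Gubeladze, which is what the paper is relying on.
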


\section{Main Results}

The following result is motivated from (\cite{An78}, Lemma 5.7).
\begin{lemma}\label{6l2}
 Let $R=R_0\op R_1\op \cdots$ and $S=S_0\op S_1\op \cdots$ be two positively graded rings with
 $R\subset S$ and $R_i\subset S_i,\forall i\geq 0$. If the canonical map $\gt(R,S): {\CI}(R,S)\ra {\CI}(R[X],S[X])$
 is an isomorphism, then the canonical map $\gt(R_0,S_0): {\CI(R_0,S_0)}\ra {\CI}(R,S)$ is
 an isomorphism.
\end{lemma}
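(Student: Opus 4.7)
The plan is to invoke the Swan--Weibel homotopy trick. Let $\psi: (R,S) \to (R[X],S[X])$ denote the homogenization morphism of pairs defined on elements by $\psi(\sum_{i} x_i) = \sum_{i} x_i X^i$; the hypothesis $R_i \subset S_i$ guarantees that $\psi$ is compatible with the inclusion $R \subset S$ and hence is a genuine morphism of pairs. Write $j: (R,S) \to (R[X],S[X])$ for the standard inclusion, so that by definition $\gt(R,S) = \CI(j)$. Let $\epsilon_a: (R[X],S[X]) \to (R,S)$ denote evaluation $X \mapsto a$ for $a \in \{0,1\}$, let $\iota: (R_0,S_0) \to (R,S)$ be the inclusion (so $\gt(R_0,S_0) = \CI(\iota)$), and let $\pi: (R,S) \to (R_0,S_0)$ be the degree-zero projection, which is a ring map on each side because the positive-degree part is an ideal.

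First I would record the elementary identities $\epsilon_0 \circ j = \epsilon_1 \circ j = \mathrm{id}_{(R,S)}$, $\epsilon_1 \circ \psi = \mathrm{id}_{(R,S)}$, $\pi \circ \iota = \mathrm{id}_{(R_0,S_0)}$, together with the key factorization $\epsilon_0 \circ \psi = \iota \circ \pi$, which simply says that $\sum_i x_i \mapsto x_0$ whether one views $x_0$ inside $(R_0,S_0)$ or inside $(R,S)$. Since $\gt(R,S) = \CI(j)$ is assumed to be an isomorphism and $\epsilon_a \circ j = \mathrm{id}$, we conclude $\CI(\epsilon_0) = \CI(\epsilon_1) = \CI(j)^{-1}$. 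Applying $\CI$ to $\epsilon_1 \circ \psi = \mathrm{id}$ then forces $\CI(\psi) = \CI(j)$, i.e.\ homogenization and inclusion induce the same map on $\CI$.

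Now apply $\CI$ to the key factorization: $\CI(\iota) \circ \CI(\pi) = \CI(\epsilon_0) \circ \CI(\psi) = \CI(j)^{-1} \circ \CI(j) = \mathrm{id}_{\CI(R,S)}$, which shows that $\gt(R_0,S_0) = \CI(\iota)$ is surjective. Injectivity follows immediately from $\CI(\pi) \circ \CI(\iota) = \CI(\pi \circ \iota) = \mathrm{id}_{\CI(R_0,S_0)}$, and hence $\gt(R_0,S_0)$ is an isomorphism.

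The only point requiring attention is verifying that $\psi$ is a morphism of pairs $(R,S) \to (R[X],S[X])$, which hinges crucially on the graded inclusion $R_i \subset S_i$ for all $i \geq 0$; once this is in place, the argument is the standard ``evaluation at $X=0$ kills everything but degree zero'' manipulation, and no further obstacles arise.
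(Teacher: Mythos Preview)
Your proof is correct and follows essentially the same route as the paper's: both are the Swan--Weibel homotopy trick, using the homogenization morphism (your $\psi$, the paper's $w$), the two evaluations $\epsilon_0,\epsilon_1$, and the key factorization $\epsilon_0\circ\psi=\iota\circ\pi$ to show that $\CI(\iota)\circ\CI(\pi)=\mathrm{id}$. The only cosmetic difference is that you record the intermediate equality $\CI(\psi)=\CI(j)$, whereas the paper goes directly from $\CI(\epsilon_0)=\CI(\epsilon_1)$ to $\CI(\epsilon_1\circ\psi)=\mathrm{id}$; the logic is identical.
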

\begin{proof}
 This result uses Swan-Weibel's homotopy trick.
 Let $j:(R_0,S_0)\ra (R,S)$ be the inclusion map and $\pi:(R,S)\ra (R_0,S_0)$ the canonical surjection defined as
 $\pi(s_0+s_1+\cdots+s_r)=s_0$, where $s_o+s_1+\cdots+s_r \in S$. Then $\pi j=Id_{(R_0,S_0)}$. 
 Applying the functor $\CI$, we get that $\CI(\pi)\gt(R_0,S_0) =Id_{\CI(R_0,S_0)}$, where $\gt(R_0,S_0)=\CI(j)$.  Hence
  the canonical map $\gt(R_0,S_0)$ is injective. So we have to prove that
 $\gt(R_0,S_0)$ is surjective.
 
 Let $e_0,e_1:(R[X],S[X])\ra (R,S)$ be two evaluation maps defined as $X\ra 0,X\ra 1$ respectively and $i$ the
 inclusion map from $(R,S)\ra (R[X],S[X])$. Then we get that $e_0i=e_1i$.
 Let $w:(R,S)\ra (R[X],S[X])$ be a map defined as $w(s)=s_0+s_1X+\cdots+s_rX^r$, where $s=s_0+s_1+\cdots+s_r\in S$.
 It is easy to see that $w$ is a ring homomorphism from $S\ra S[X]$ and moreover $w$ is a morphism of ring extensions
 i.e. $w(R)\subset R[X]$.  It is easy to see that  $e_0w=j\pi\, \cdots (a)$. 
 
 Since $e_0i=e_1i=Id_{(R,S)}$, we get that $\CI(e_0)\gt(R,S)=\CI(e_1)\gt(R,S)=Id_{\CI(R,S)}$ (Recall that $\gt(R,S)=\CI(i)$).
 Therefore $\CI(e_0)$ and $\CI(e_1)$ are inverses of the
 canonical isomorphism $\gt(R,S)$. Hence $\CI(e_0)=\CI(e_1)$. By $(a)$,
  we have $\CI(e_0)\CI(w)=\gt(R_0,S_0)\CI(\pi)$. 
 Hence  $\CI(e_1)\CI(w)=\gt(R_0,S_0)\CI(\pi)$. Note that 
 $\CI(e_1)\CI(w)=Id_{\CI(R,S)}=\gt(R_0,S_0)\CI(\pi)$. Therefore we
 get that $\gt(R_0,S_0)$ is surjective. This completes the proof.
 $\hfill \gj$
\end{proof}

The following result is (\cite{B-G}, Theorem 4.79) when the ring extension is reduced. We use the 
same arguments as in (\cite{B-G}, Theorem 4.42, 4.79) to prove the following result. For an alternate proof
of the following result, see Remark (\ref{6r2}).
\begin{lemma}\label{6l1}
 Let $A\subset B$ be an extension of rings and $M$ an affine monoid.
 Assume that $A$ is subintegrally closed in $B$. Then $A[M]$ is subintegrally
 closed in $B[M]$.
\end{lemma}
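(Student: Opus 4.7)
The strategy has three stages: (i) embed $M$ as a submonoid of some $\BZ_+^n$, (ii) invoke Lemma~\ref{6l7} to conclude that $A[\BZ_+^n]$ is subintegrally closed in $B[\BZ_+^n]$, and (iii) descend to $A[M]\subset B[M]$ via a coefficient/support (grading) argument.

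For the embedding, since $M$ is affine, $gp(M)$ is a finitely generated torsion-free abelian group, hence $gp(M)\cong\BZ^r$. Assuming first that $M$ is positive, the rational cone $C=\BR_{\geq 0}\cdot M\subset\BR^r$ is strongly convex, so its facets admit primitive integer inward-normal vectors $a_1,\ldots,a_n\in\BZ^r$ which collectively span $\BZ^r$. The map $\psi\colon gp(M)\to\BZ^n$, $m\mapsto(a_1\cdot m,\ldots,a_n\cdot m)$, is then injective, with $\psi(M)\subset\BZ_+^n$; this yields a monoid embedding $M\inj\BZ_+^n$.

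Now Lemma~\ref{6l7} gives that $A[\BZ_+^n]$ is subintegrally closed in $B[\BZ_+^n]$, and via $\psi$ we have $A[M]\subset A[\BZ_+^n]$ and $B[M]\subset B[\BZ_+^n]$. If $\ga\in B[M]$ is subintegral over $A[M]$ via a chain of elementary subintegral extensions $A[M]=R_0\subset\cdots\subset R_k\ni\ga$, the same generators exhibit $\ga$ as subintegral over $A[\BZ_+^n]$ inside $B[\BZ_+^n]$, so $\ga\in A[\BZ_+^n]$. Under the natural $\BZ_+^n$-grading on $B[\BZ_+^n]$, both $A[\BZ_+^n]$ and $B[M]$ are graded subrings, and
\[ A[\BZ_+^n]\cap B[M]\;=\;\bigoplus_{\nu\in\psi(M)}A\cdot X^\nu\;=\;A[M], \]
since the first condition forces the coefficients into $A$ while the second restricts the support to $\psi(M)$. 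Hence $\ga\in A[M]$, settling the positive affine case.

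If $U(M)$ is nontrivial, one instead embeds $M\subset gp(M)\cong\BZ^r$ directly; then $A[\BZ^r]$ is the localization of $A[\BZ_+^r]$ obtained by inverting the coordinate variables, and the same reduction applies once one invokes the standard fact that subintegral closedness is preserved under such a localization. The main obstacle is thus Step~(i), the embedding: it rests on strong convexity of the monoid cone in the positive case and on localization stability of subintegral closedness in the general case. Once $M\inj\BZ_+^n$ (or the analogous $\BZ^r$-embedding) is in hand, the reduction to Lemma~\ref{6l7} via the coefficient/support description of the graded intersection is purely formal.
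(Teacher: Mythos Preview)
Your argument is correct, and in its second case (non-positive $M$) it is essentially the paper's own proof. The paper proceeds uniformly, without any case distinction: it observes that $A[gp(M)]\cap B[M]=A[M]$ (the same graded intersection trick you use), reduces to showing $A[\BZ^r]$ is subintegrally closed in $B[\BZ^r]$ where $gp(M)\cong\BZ^r$, and then invokes the localization stability of subintegral closedness to reduce to $A[\BZ_+^r]\subset B[\BZ_+^r]$, which is Lemma~\ref{6l7}.

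Your first case, treating positive $M$ by embedding $M\inj\BZ_+^n$ via facet normals, is a genuinely different route. It avoids the localization step entirely at the cost of importing some polyhedral geometry (strong convexity of the cone, existence of enough integral facet normals). A minor imprecision: the facet normals $a_1,\ldots,a_n$ span $\BQ^r$ (equivalently $\BR^r$), which is what you need for $\psi$ to be injective, but they need not span $\BZ^r$ as a lattice. This does not affect your argument. What your approach buys is a self-contained reduction that stays inside positive monoids; what the paper's approach buys is uniformity and brevity, since the passage through $gp(M)$ works regardless of whether $U(M)$ is trivial and requires no cone theory. In fact your second case already handles everything, so your first case, while correct, is redundant.
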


\begin{proof}
 It is easy to see that $A[gp(M)]\cap B[M]=A[M]$. Hence it is enough to prove that $A[gp(M)]$ is
 subintegrally closed in $B[gp(M)]$. Since $M$ is affine, 
 $gp(M)\cong \BZ^r$ for some integer $r>0$. Hence we have to prove that $A[\BZ^r]$ is
 subintegrally closed in $B[\BZ^r]$. Since subintegrality commutes with localization
 (see \cite{B-G}, Theorem 4.75d), we have only to prove that $A[\BZ^r_+]$ is subintegrally
 closed in $B[\BZ^r_+]$. This is indeed the case because of (\ref{6l7}).
 $\hfill \gj$
\end{proof}

\subsection{Proof of the Theorem (\ref{6t2})}

 $(a)$  Since $N$ is positive affine, $N$ has a positive grading by (\cite{B-G}, Proposition 2.17f).
   Since $M$ is a submonoid of $N$, it has a positive grading induced from $N$.
  Therefore both $A[M]$ and $B[N]$ have positive gradings. Hence
   we can write $A[M]=A_0\op A_1\op\cdots $  and $B[N]=B_0\op B_1\op\cdots$
   with $A_0=A$, $B_0=B$. We define $R:=A[M]$, $S:=B[N]$ and $R_0:=A$,
   $S_0:=B$. By the hypothesis, $R$ is subintegrally closed in $S$, hence by (\cite{S-S}, Theorem 1.5), 
   $\CI(R,S)\cong \CI(R[X],S[X])$. Therefore by Lemma (\ref{6l2}), we get that $\CI(A,B)\cong \CI(A[M],B[N])$.
   
  $(b)$ First assume that $N$ is affine. Since $B$ is reduced, by (\ref{6l11}), the subintegral closure
  of $A[M]$ in $B[N]$ is $_B\!{^{^+}\!\!\!A}[N\cap sn(M)]$. Note that $sn(M)= _{gp(M)}\!\!\!{^{^+}\!\!M}$ in our
  notation. It is easy to see that  $_N\!{^{^+}\!\!M}=N\cap  _{gp(M)}\!\!\!{^{^+}\!\!M}$. By hypothesis, $_B\!{^{^+}\!\!\!A}=A$
  and $_N\!{^{^+}\!\!M}=M$. Hence $A[M]$ is subintegrally closed in $B[N]$.
    Therefore by $(a)$, we get that $\CI(A,B)\cong \CI(A[M],B[N])$. 
    
    Now the case when $N$ is not affine. Let $\gL:=\{ N_i: i\in I\}$ be the set  of all affine submonoids of $N$. 
    Then $\gL$ forms a directed set by defining
  $N_i\leq N_j$ if $N_i$ is a submonoid of $N_j$.  Let $M_i:=M\cap N_i$, where $N_i\in \gL$. Since $M$ is subintegrally closed
  in $N$, it is easy to see that $M_i$ is subintegrally closed in $N_i$.
  Then $N=\cup_{N_i\in \Lambda}N_i$ and  $M=\cup M_i$. If $N_i\leq N_j$, then there exist a morphism of ring
  extension $\phi_{ij}:(A[M_i],B[N_i])\ra (A[M_j],B[N_j])$ induced from the inclusion map $B[N_i]\ra B[N_j]$.
  Hence $(\{(A[M_i],B[N_i])\}_{N_i\in \gL},\{\phi_{ij}\}_{N_i\leq N_j})$
  forms a directed system in the category of ring extensions. Then the direct limit of this system is $((A[M],B[N]),\{\phi_i\})$,
  where $\phi_i:(A[M_i],B[N_i])\ra (A[M],B[N])$ i.e.  $\underrightarrow{lim}_{\gL}(A[M_i],B[N_i])=(A[M],B[N])$.
  
  Similarly as in the above paragraph one sees that 
  $(\CI(A[M_i],B[N_i])_{N_i\in \gL}),\{\CI(\phi_{ij})\}_{N_i\leq N_j})$
  forms a directed system in the category of abelian groups.
  
 We want to prove that $\underrightarrow{lim}_{\gL} (\CI(A[M_i],
 B[N_i]))\cong \CI (\underrightarrow{lim}_{\gL}(A[M_i],B[N_i]))=\CI(A[M],B[N])$. For each $N_i$,
 we have a map $\CI(j):\CI(A[M_i],B[N_i])\ra \CI(A[M],B[N])$ induced by the inclusion map $j:B[N_i]
 \ra B[N]$. Hence by the universal property of the direct limit, there exist a map 
 $$\phi: \underrightarrow{lim}_{\gL}(\CI(A[M_i],B[N_i]))\ra \CI(A[M],B[N]).$$ 
 We claim that $\phi$ is an isomorphism. For surjectivity, let $I\in \CI(A[M],B[N])$. Hence there exist
 $N_k\in \gL$ such that $I\in \CI(A[M_k],B[N_k])$. Taking the image
 of $I$ inside $\underrightarrow{lim}_{\gL}\CI(A[M_i],B[N_i])$, we obtain that $\phi$ is surjective.
  Since the natural inclusion $j:B\ra B[N_i]$ induces an isomorphism $\CI(j):\CI(A,B)\cong \CI
 (A[M_i],B[N_i])$ for each $N_i$, we obtain that $\CI(A,B)\cong \underrightarrow{lim}_{\gL}\CI(A[M_i],B[N_i])$. 
 Now it is easy to see that $\phi$ is injective. Therefore
 we get that $\CI(A,B)\cong \CI(A[M],B[N])$.
 
 $(c)$ As in $(b)$, we can assume that $M=N$ is affine.
  Then by (\ref{6l1}), $A[M]$ is subintegrally closed in $B[M]$. 
 Hence as in the proof of  $(b)$, we get that $\CI(A,B)\cong \CI(A[M],B[M])$.
 
 $(d)$ $(i)$
 To prove that $A$ is subintegrally closed in $B$, let $b\in B$ with $b^2,b^3\in A$. Let $m\in M$.
 Let $I:=(b^2,1-bm)$ and $J:=(b^2,1+bm)$ be two $A[M]$-submodules of $B[N]$. Note that $IJ\subset A[M]$ and
 $(1-bm)(1+bm)(1+b^2m^2)=1-b^4m^4\in IJ$. Hence $1=b^4m^4+1-b^4m^4\in IJ$ i.e. $IJ=A[M]$.
 Therefore $I\in \CI(A[M],B[N])$. Let $\pi$ be the natural surjection from $B[N]\ra B$
 sending $N\ra 0$.  Then $\CI(\pi)(I)=A$. By hypothesis $\CI(\pi)$ is an isomorphism, hence $I=A[M]$. Therefore $b\in A$.
 Hence $A$ is subintegrally closed in $B$.
 
 $(ii)$ Let $g\in B[N]$ such that $g^2,g^3\in A[M]$. Let  $I:=(g^2,1+g+g^2)$ and $J:=(g^2,1-g+g^2)$ be two $A[M]$-submodules
 of $B[N]$. Then $(1+g+g^2)(1-g+g^2)=(1+g^2+g^4)\in IJ\Rightarrow 1+g^2\in IJ\Rightarrow 1=g^4+(1+g^2)(1-g^2)\in IJ$.
 Note that $IJ\subset A[M]$, hence $IJ=A[M]$. Therefore $I\in \CI(A[M],B[N])$.  Let $\pi(g)=b\in B$ ($\pi$ is defined in $(i)$). 
 Then $\CI(\pi)(I)=(b^2,1-b+b^2)$. Since $g^2,g^3\in A[M]$, we obtain that $b^2,b^3\in A$. 
 But $A$ is subintegrally closed in $B$ by $(i)$, hence we get that $b\in A$. Hence
 $\CI(\pi)(I),\CI(\pi)(J)$ are contained in $A$. Therefore $\CI(\pi)(I)=A\Rightarrow I=A[M]$. Hence $g\in A[M]$.
 This proves that $A[M]$ is subintegrally closed in $B[N]$.

 $(iii)$ Let us look at the following commutative diagram
 \[
\xymatrix
{\CI(A,B) \ar@{->}[rrr]^{\phi_1}\ar@{->}[d]^{\phi_2}&& &\CI(A/nil(A),B/nil(B))\ar@{->}[d]^{\phi_3}\\
 \CI(A[M],B[N])\ar@{->}^{\phi_4}[rrr] & & & \CI(\frac{ A[M]}{nil(A)[M]},\frac {B[N]}{nil(B)[N]}),
}
\]
where $\phi_i$ are natural maps $\forall i$. 
 By $(i)$, we get that $A$ is subintegrally closed in $B$, hence by (\cite{S-S}, Lemma 1.2), $nil(B)\subset A$. 
 Hence $nil(B)=nil(A)$. Therefore by (\cite{R-S}, Proposition 2.6), we get that $\phi_1$ is an isomorphism. 
 Since $N$ is a cancellative
 torsion-free monoid, by (\cite{B-G}, Theorem 4.19), $nil(B[N])=nil(B)[N]$. By $(c)$, 
 we get that $\phi_3$ is an isomorphism. Hence $\phi_2$ is an isomorphism if
 and only if $\phi_4$ is an isomorphism. By (\cite{R-S}, Proposition 2.7),
 $\phi_4$ is an isomorphism if and only if $(1+nil(B)[N])/(1+nil(A)[M])$ is
 a trivial group. Since $nil(B)=nil(A)$ this is equivalent to $nil(B)=0$ (i.e. $B$ is reduced) or $M=N$.
 $\hfill \gj$

\begin{remark}\label{6r2}
 If $A$ is subintegrally closed in $B$ and $M=N$, then we obtain $\CI(A,B)\cong \CI(A[M],B[M])$ from the 
 arguments as in (\ref{6t2}$d(iii))$
 without using Lemma (\ref{6l1}). Hence
 using (\ref{6t2}$(d(ii))$), we get that $A[M]$ is subintegrally closed in $B[M]$. This gives an alternate 
 proof of Lemma (\ref{6l1}) without the hypothesis that $M$ is affine. $\hfill \gj$
\end{remark}

\begin{corollary}
 Let $A\subset B$ be an extension of reduced rings such that $A$ is subintegrally closed in $B$.
 Then $\CI(A,B)\cong \CI(A[X_1,\ldots,X_m],B[X_1,\ldots,X_m,Y_1,\ldots, Y_n])$.
\end{corollary}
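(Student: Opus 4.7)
The plan is to reduce the corollary to a direct application of Theorem~\ref{6t2}$(b)$. Set $M := \BZ_+^m$ and $N := \BZ_+^{m+n}$, viewing $M$ as the submonoid of $N$ consisting of tuples whose last $n$ coordinates vanish. Both are positive cancellative torsion-free monoids, and under the identifications $A[M] = A[X_1,\ldots,X_m]$ and $B[N] = B[X_1,\ldots,X_m,Y_1,\ldots,Y_n]$, the conclusion of the corollary becomes exactly $\CI(A,B) \cong \CI(A[M],B[N])$. Since $B$ is reduced and $A$ is subintegrally closed in $B$ by hypothesis, the only remaining hypothesis of Theorem~\ref{6t2}$(b)$ to verify is that $M$ is subintegrally closed in $N$.

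For this, I would argue directly that $M$ admits no non-trivial elementary subintegral extension inside $N$. Suppose $x \in N$ satisfies $x^2, x^3 \in M$. Writing $x = (a,b)$ with $a \in \BZ_+^m$ and $b \in \BZ_+^n$ (additive notation), the condition $2x \in M$ forces $2b = 0$ in $\BZ_+^n$, hence $b = 0$, hence $x \in M$. Therefore no proper elementary subintegral extension of $M$ is contained in $N$, and iterating shows that $_{N}{^{^+}\!\!M} = M$, i.e. $M$ is subintegrally closed in $N$.

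Applying Theorem~\ref{6t2}$(b)$ with this choice of $M \subset N$ now yields the required isomorphism $\CI(A,B) \cong \CI(A[X_1,\ldots,X_m], B[X_1,\ldots,X_m,Y_1,\ldots,Y_n])$. The proof is essentially mechanical: the only content is the monoid-subintegrality check, which is elementary, so there is no real obstacle to anticipate.
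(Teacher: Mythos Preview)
Your proposal is correct and follows exactly the paper's approach: identify the polynomial extensions as monoid algebras over $M=\BZ_+^m\subset N=\BZ_+^{m+n}$, check that $M$ is subintegrally closed in $N$, and invoke Theorem~\ref{6t2}$(b)$. The paper merely asserts the subintegral-closedness without argument, whereas you supply the elementary verification; otherwise the two proofs are identical.
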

\begin{proof}
 Observe that the submonoid generated by $(X_1,\ldots,X_m)$ is subintegrally closed in the
 monoid generated by $(X_1,\ldots,X_m,Y_1,\ldots, Y_n)$. Hence we obtain the result using (\ref{6t2}$(b)$).
 $\hfill \gj$
\end{proof}

In the following remark, we give an application of the result (\ref{6t2}$(c)$).
 \begin{remark}\label{6r1}(cf. \cite{S-S} Remark 1.8)
  Let $A$ be a seminormal ring which is Noetherian or an integral domain. 
  Let $M$ be a positive  seminormal monoid.
  Let $K$ be the total quotient ring of $A$. Then $K$ is a finite product of fields, 
   hence $Pic(K)$ is a trivial group. By Anderson (\cite{An88}, Corollary 2), 
   $Pic(K[M])$ is a trivial qroup. By (\cite{B-G}, Proposition 4.20), $U(K)=U(K[M])$
  and $U(A)=U(A[M])$. Now using the same arguments as in (\cite{S-S}, Remark 1.8), one can
  easily  deduce that $Pic(A)\cong Pic(A[M])$ from (\ref{6t2}).
 $\hfill \gj$
 \end{remark}
 
\medskip

\subsection{Proof of the Theorem (\ref{6mt2})}

$(i)$ Following the arguments of (\cite{S-S}, Theorem 2.6), we observe that we have only to prove that
the maps $\phi(A,{^{^+}\!\!\!A},B)$ and $\phi(A[M],{^{^+}\!\!\!A[M]},B[M])$ are surjective.
Since $^{^+}\!\!\!A$ is subintegrally closed in $B$, $\gt({^{^+}\!\!\!A},B)$ is surjective by (\ref{6t2}$c$).
Therefore we have only to show that $\phi(A,{^{^+}\!\!\!A},B)$ is surjective. But this follows from (\cite{Sa}, Proposition 3.1)
by taking $C={^{^+}\!\!\!A}$.

$(ii)$  If $A\subset B$ be a subintegral extension of $\BQ$-algebras, then a natural isomorphism $\xi_{B/A}: B/A\ra \CI(A,B)$ is
defined in \cite{R-S}. As in (\cite{R-S}, Lemma 5.3) this yield a commutative diagram
\[
\xymatrix
{^{^+}\!\!\!A/A \ar@{->}[r]^{\xi_{^{^+}\!\!\!A/A}}\ar@{->}[d]^{j} &\CI(A,{^{^+}\!\!\!A})\ar@{->}[d]^{\gt(A,{^{^+}\!\!\!A})}\\
 ^{^+}\!\!\!A[M]/A[M]\ar@{->}[r]^{\xi} & \CI(A[M],{^{^+}\!\!\!A[M]}),
}
\]
 where $\xi:=\xi_{^{^+}\!\!\!A[M]/A[M]}$. Both $\xi_{^{^+}\!\!\!A/A}$ and $\xi$
are isomorphisms by (\cite{R-S}. Main Theorem 5.6 and \cite{R-R-S}, Theorem 2.3).
Now $\CI(A[M],{^{^+}\!\!\!A[M]})\cong {^{^+}\!\!\!A[M]}/A[M]\cong \BZ[M]\ot_{\BZ}{ ^{^+}\!\!\!A}/A\cong \BZ[M]\ot_{\BZ}\CI(A,{^{^+}\!\!\!A})$.
$\hfill \gj$

\medskip
{\bf Acknowledgement:}
I am grateful to my advisor, Prof. Manoj K. Keshari, for his
guidance, motivation and many useful discussions. 
I thank Vivek Sadhu for introducing his results to me and for some useful discussions.
I acknowledge the financial support of  Council of Scientific and Industrial Research
(C.S.I.R.), Government of India. I sincerely thank the referee for his/her valuable
suggestions and for pointing out the mistakes in the earlier version.

{\small

}
 
\end{document}